\documentclass{IEEECDC}

\usepackage[left=0.75in,right=0.75in,top=0.73in,bottom=.76in]{geometry}
\usepackage{quiver}
\usepackage{verbatim}
\usepackage{xcolor}
\let\proof\relax
\let\endproof\relax
\usepackage{amsmath, amsfonts, amssymb, amsthm, mathtools}
\usepackage{thmtools}
\usepackage[frozencache,cachedir=minted-cache]{minted}
\usepackage{hyperref}
\pagenumbering{gobble}

\newcommand{\red}[1]{\textcolor{red}{#1}}

\newcommand{\mh}[1]{\textcolor{green}{MH: #1}}
\newcommand{\tyler}[1]{\textcolor{purple}{TH: #1}}

\newtheorem{theorem}{Theorem}[section]

\theoremstyle{definition}
\newtheorem{definition}[theorem]{Definition}
\newtheorem{example}[theorem]{Example}
\newtheorem{remark}[theorem]{Remark}

\newtheorem{problem}{Problem}

\makeatletter
\newcommand{\lrarrow}{\mathrel{\mathpalette\lrarrow@\relax}}
\newcommand{\lrarrow@}[2]{%
  \vcenter{\hbox{\ooalign{%
    $\m@th#1\mkern6mu\rightarrow$\cr
    \noalign{\vskip1pt}
    $\m@th#1\leftarrow\mkern6mu$\cr
  }}}%
}
\newcommand{\hto}{\ensuremath{\,\mathaccent\shortmid\rightarrow\,}}
\newcommand{\maps}{\colon}
\newcommand{\R}{\mathbb{R}}
\DeclareMathOperator{\Para}{Para}
\newcommand{\Conv}{\textnormal{\textbf{Conv}}}
\newcommand{\id}{\mathrm{id}}
\newcommand{\define}[1]{\textbf{#1}}

\definecolor{mygray}{RGB}{232, 234, 237}

\makeatother

\begin{document}
\title{Modeling Model Predictive Control: A Category Theoretic Framework for Multistage Control Problems}
\author{Tyler Hanks, Baike She, Evan Patterson, Matthew Hale, Matthew Klawonn, James Fairbanks*
\thanks{*Tyler Hanks and James Fairbanks are with the Department of Computer Science, University of Florida, emails: \texttt{\{t.hanks,fairbanksj\}@ufl.edu}. Baike She and Matthew Hale are with the Department of Electrical and Computer Engineering, Georgia Tech, emails: \texttt{\{bshe6,mhale30\}@gatech.edu}. Evan Patterson is with the Topos Institute, email: \texttt{evan@epatters.org}. Matthew Klawonn is with the Information Directorate, U.S. Air Force Research Lab, email: \texttt{matthew.klawonn.2@us.af.mil}.}
\thanks{Tyler Hanks was supported by the National Science Foundation Graduate
Research Fellowship Program under Grant No. DGE-1842473. Any opinions, findings,
and conclusions or recommendations expressed in this material are those of the author(s)
and do not necessarily reflect the views of the NSF. 
Matthew Hale and Baike She were supported by the Air Force Office of
Scientific Research (AFOSR) under Grant No. FA9550-23-1-0120. 
 Matthew Klawonn was funded by a SMART SEED grant. James Fairbanks was supported by DARPA under award no. HR00112220038.}}

%
\maketitle

\begin{abstract}

Model predictive control (MPC) is an optimal control technique which involves solving a sequence of constrained optimization problems across a given time horizon. 
In this paper, we introduce a category theoretic framework for constructing complex MPC problem formulations by composing subproblems.
Specifically, we construct a monoidal category {\textemdash} called $\Para(\Conv)$ {\textemdash} whose objects are Euclidean spaces and whose morphisms represent constrained convex optimization problems. 
We then show that the multistage structure of typical MPC problems arises from sequential composition in $\Para(\Conv)$, while parallel composition can be used to model constraints across multiple stages of the prediction horizon.
This framework comes equipped with a rigorous, diagrammatic syntax, allowing for easy visualization and modification of complex problems. 
Finally, we show how this framework allows a simple software realization in the Julia programming language by integrating with existing mathematical programming libraries to provide high-level, graphical abstractions for MPC.
\end{abstract}

\section{Introduction}

Model-predictive control (MPC) originated in process control in chemical engineering~\cite{qin1997overview}, and
it has found success in many applications, including 
autonomous driving \cite{kabzan2019learning}, battery charging \cite{hredzak2013model}, path planning \cite{luis2020online}, and energy systems \cite{mariano2021review}. MPC consists of specifying and repeatedly solving constrained optimization problems designed to model the response of a system to control inputs while minimizing a cost function over a finite prediction horizon. 
The generality of MPC gives it several benefits, including the ability to incorporate constraints on a system's state and control inputs that must be satisfied at each timestep or across multiple different timesteps in the prediction horizon.

As the complexity of MPC problems has grown over time, so too has interest in the identification and use of structure inherent to such problems, such as sparsity or decoupling in a problem's constraints. For example, the multistage structure of classic MPC optimization problems has been identified as a rich source of specialized solution algorithms e.g., \cite{domahidi_efficient_2012}\cite{odonoghue_splitting_2013}. In addition to performance benefits, identification of such underlying structure reveals how complex problems can be built up \emph{compositionally} from component subproblems. This benefits MPC practitioners by allowing them to more easily specify complicated MPC problems in a modular and extensible fashion.

In this vein, we develop a novel perspective on the compositional nature of MPC problems. Specifically, we formalize the multistage structure inherent to MPC problems by utilizing tools from \emph{applied category theory}, a discipline focused on applying abstract mathematics to understand structure within science and engineering problems. Such techniques have been applied, for example, to the study of cyber-physical systems \cite{Bakirtzis_2021}, dynamical systems \cite{myers2022categorical}, and machine learning algorithms \cite{fong2019backprop}. Recently, the ICRA Compositional Robotics workshops have demonstrated interest in applying category theoretic techniques to problems in robotics and autonomy \cite{wilhelm_constraint_nodate}\cite{bakirtzis_categorical_nodate}, and the developments we present contribute to this growing body of work. 

Our specific technical contributions are as follows. We first show that convex bifunctions, which were introduced by Rockafellar \cite{theRock} for encoding general convex programs, form a mathematical structure called a symmetric monoidal category (SMC) that we call $\Conv$. This means that bifunctions can be composed in sequence, in parallel, and hierarchically. 

We then apply the \emph{parameterized category construction} to $\Conv$ to yield a new monoidal category, $\Para(\Conv)$, of \emph{parameterized} convex bifunctions. This formalizes the important semantic distinction between control variables and state variables in MPC problems, namely that the state variables couple across timesteps while the control inputs are typically independent to each timestep. 

Finally, we show how each type of composition in $\Para(\Conv)$ can be used to build complex MPC optimization problems in a modular and automated fashion: sequential composition recovers the multistage structure of MPC problems while parallel composition can encode constraints between different timesteps and hierarchical composition allows collections of subproblems to be ``black boxed" and treated as a single subproblem in a larger composite problem.

In addition to showing that MPC optimization problems are an instance of a general categorical abstraction, framing MPC in the unified mathematical language of category theory comes with a number of practical benefits, including a rigorous graphical syntax for compositionally building MPC problems, and a straightforward path to a clean and correct software implementation. Our prototype Julia implementation of this framework demonstrates these benefits by integrating with existing mathematical programming libraries to provide high-level, graphical abstractions for MPC.

The rest of the paper is structured as follows. Section~\ref{sec:motivation} presents motivation and problem statements. Section~\ref{sec:background} provides requisite background in category theory. Section~\ref{sec:ConvCat} presents convex bifunctions and their organization into a symmetric monoidal category. Section~\ref{sec:MPCinConv} presents the main result, a characterization of convex MPC optimization problems as morphisms in $\Para(\Conv)$. Section~\ref{sec:Impl} presents our Julia implementation, and Section VII concludes.

\section{Motivation and Problem Statements}
\label{sec:motivation}

In this section, we aim to build intuition behind the compositional structure of MPC optimization problems. MPC typically operates on a discretized version of the system dynamics $\dot x = f(x, u)$ represented by $x_{i+1} = f_d(x_i, u_i)$ for $i\in \mathbb{N}$, where $f_d$ can be generated from $f$ using a 
discretization technique such as Euler's method or the Runge-Kutta method. When the dynamics are discretized, the MPC problem naturally exhibits a multistage structure which we discuss below.
\vspace{-2ex}
\subsection{Multistage MPC Problems}
In this paper, we consider multistage MPC problems of the following form.
\begin{definition}
\label{def:mpc}
    Let $X$ and $U$ be Euclidean spaces, i.e., $X=\R^n$ and $U=\R^m$ for some natural numbers $n$ and $m$. Given a discrete dynamical system $f_d \maps X\times U\rightarrow X$, cost function $\ell\maps X\times U\rightarrow \R$, 
    initial state~$x_{init} \in X$,
    and constraint function $g\maps X\times U\rightarrow \R^p$, the \define{receding horizon MPC optimization problem} defined by this data\footnote{The word ``data" in this context refers to the mathematical objects that go into a definition.} is
    \begin{equation*}
    \begin{array}{ll@{}ll}
    \underset{u_k, \ldots, u_{N-1}
    }{\text{minimize}} &  \sum_{i=k}^{N-1}\ell (x_i, u_i) & \\
    \text{subject to} & x_{i+1}=f_d(x_i, u_i), & \ i=k,\dots,N-1, \\
    & g(x_i, u_i) \leq 0, & \ i=k,\dots,N-1, \\
    & x_k = x_{init},
    \end{array}
\end{equation*}
where $k < N$ is the current timestep, $x_{init}$ is the actual state of the system at timestep $k$, and $N$ is the prediction horizon. Similarly, the \emph{moving horizon} MPC optimization problem is the same formulation as above but with all 
instances of ``$N-1$'' replaced by ``$k+N-1$''. The $x_i$'s are the \define{state variables} while the $u_i$'s are the \define{control inputs}. Thus we refer to $X$ as the \define{state space} and $U$ as the \define{control space}. 
\end{definition}
 
 In practice, there is often an additional constraint or cost term on the final state to ensure stability. When $g$ and $\ell$ are convex and $f_d$ is affine, the MPC optimization problem is convex. 
 Such settings are well-studied in MPC literature, and we restrict to this case in the present work for simplicity of presentation and implementation. However, we emphasize that the general modeling framework developed in this paper can also be applied to model non-convex problems.
 
 \begin{example}
 \label{ex:running}
 Suppose we want to implement a \emph{constrained} linear-quadratic regulator for a linear system given by $x_{k+1} = Ax_k + Bu_k$ by minimizing the cost 
 \begin{equation*}
     J = \sum_{k=0}^\infty x_k^TQx_k + u_k^TRu_k,
 \end{equation*}
while remaining within operational bounds $x_{min}\leq x_k \leq x_{max}$ and $u_{min}\leq u_k \leq u_{max}$ for all $k\in [0,\infty)$, where $x_{min}, x_k, x_{max} \in \mathbb{R}^n$, $u_{min},u_k,u_{max} \in \mathbb{R}^m$, and $Q,R$ positive semi-definite matrices of the correct dimension. Although this scenario arises often in practice, such problems (especially as their dimension grows) often do not have an explicit solution or computing an explicit solution is computationally difficult \cite{Alessio2009}. Hence, it is common to choose a prediction horizon $N$ and reformulate the above as a moving horizon MPC problem:

\begin{equation*}
    \begin{array}{ll@{}ll}
    \text{minimize} &  \sum_{i=k}^{k+N-1} x_i^TQx_i + u_i^TRu_i & \\
    \text{subject to} & x_{i+1}=Ax_i + Bu_i, \  i=k,\dots,k+N-1, \\
    & x_{min}\leq x_i \leq x_{max}, \  i=k,\dots,k+N-1, \\
    & u_{min}\leq u_i \leq u_{max}, \  i=k,\dots,k+N-1, \\
    & x_k = x_{init}.
    \end{array}
\end{equation*} 
This problem will serve as a simple, illustrative example for demonstrating the use of our framework. 
 \end{example}

Problem formulations such as these are typically referred to as having multistage or staged structure because they can intuitively be viewed as distinct subproblems linked together by some of the decision variables.

To illustrate this, consider the following parameterized problem for a fixed $i$ representing a single stage of the MPC problem. We call this an \define{MPC subproblem}.
\begin{equation}
    x_i\mapsto \begin{array}{ll@{}ll}
    \text{minimize} & \ell (x_i, u_i) & \\
    \text{subject to} & x_{i+1}=f_d(x_i, u_i), & \\
    & g(x_i, u_i) \leq 0. & \\
    \end{array}
\end{equation}
When viewed this way in terms of individual stages, it is clear that the goal of the $i^{th}$ subproblem is to find an optimal $u_i$ and $x_{i+1}$ given a perturbation to its constraints by $x_i$. Subsequently, the value of $x_{i+1}$ becomes a perturbation to the constraints of the $(i+1)^{th}$ subproblem. This is the multistage structure we need to capture in order to give a compositional perspective on MPC problems. Crucially, only the state variables couple subproblems through time; the control variables are independent to each subproblem. These observations motivate the central problems of this paper. 

\vspace{-2ex}
\subsection{Problem Formulation}
In this work, we aim to compositionally model convex MPC optimization problems (i.e., those with linear dynamics and convex costs) using a category-theoretic framework.
Achieving this entails solving the following problems: 

\begin{problem}
\label{prob:1}
How can we model the composition of convex optimization problems using category theory?
\end{problem}
\begin{problem}
\label{prob:2}
How can we formally account for the semantic distinction between state variables and control variables in these categorical models?
\end{problem}
\begin{problem}
\label{prob:3}
How do MPC optimization problems fit into our model of composition of convex programs? 
\end{problem}
We solve Problems~\ref{prob:1} and \ref{prob:2} in Section \ref{sec:ConvCat} by constructing two new categories of convex optimization problems. Problem \ref{prob:3} is solved in Section \ref{sec:MPCinConv}. The Julia implementation in Section \ref{sec:Impl} shows the practical utility of solving these problems. 


\section{Category Theory Background} \label{sec:background}
This section briefly reviews concepts from category theory used in the rest of this paper. We refer the reader to \cite{fong2019invitation} for a more detailed introduction to applied category theory.

The most basic notion in category theory is that of a category. Categories can be seen as directed graphs with extra structure and are often used to model a specific domain of mathematical objects and relationships between them. Different categories can then be related by functors, which are the structure-preserving maps between categories.

\begin{definition}
\label{def:cat}
    A \define{category} $\mathcal{C}$ consists of a collection of \emph{objects} $X,Y,Z,\dots$, and a collection of \emph{morphisms} (also called \emph{arrows}) $f,g,h, \dots$, together with the following data: 
    \begin{itemize}
        \item Each morphism has a specified \emph{domain} and \emph{codomain} object; the notation $f:X\rightarrow Y$ signifies that $f$ is a morphism with domain $X$ and codomain $Y$.
        \item Each object $X\in \mathcal{C}$ has a distinguished \emph{identity} morphism $\text{id}_X:X\rightarrow X$.
        \item For any triple of objects $X,Y,Z\in \mathcal{C}$ and any pair of morphisms $f:X\rightarrow Y$ and $g:Y\rightarrow Z$, there is a specified \emph{composite} morphism $g\circ f:X\rightarrow Z$.
    \end{itemize}
    This data is subject to the following two axioms:
    \begin{itemize}
        \item (Unitality) For any morphism $f:X\rightarrow Y$, we have $f\circ \text{id}_X = f = \text{id}_Y\circ f$,
        \item (Associativity) For any objects $W,X,Y,Z\in\mathcal{C}$ and morphisms $f:W\rightarrow X$, $g:X\rightarrow Y$, $h:Y\rightarrow Z$, we have $h\circ(g\circ f) = (h\circ g)\circ f$.
    \end{itemize}
The set of all objects of $\mathcal{C}$ is denoted $\text{Ob}(\mathcal{C})$.
\end{definition}


The canonical example of a category is \textbf{Set}, which has sets for objects and functions between sets for morphisms. Composition in this category is 
ordinary function composition and identities are the identity functions, namely $\text{id}_X(x)=x$. 
The next example lends intuition to the composition operation that we will define for the category $\Conv$. The category $\textbf{Rel}$ has sets as objects and has binary relations of the form $R\subseteq X\times Y$ as morphisms from $X$ to $Y$.
The composite of two relations $R\subseteq X\times Y$ and $S\subseteq Y\times Z$ is defined to be
\begin{multline}
\label{eq:relcomp}
    S\circ R \coloneqq \{(x,z)\in X\times Z \mid \textnormal{there exists } y \in Y \\ \textnormal{such that } (x,y) \in R \textnormal{ and } (y,z)\in S \}.
\end{multline}

\noindent
The identity relation is $\text{id}_X = \{(x,x')\in X\times X \mid x = x'\}.$
We can map the objects and morphisms of one category to those of another another using a \textit{functor}, which can be seen as a graph homomorphism that preserves the additional structure of a category, namely composition and identities.

\begin{definition}\label{def:functor}
    Given categories $\mathcal{C}$ and $\mathcal{D}$, a \define{functor} $F:\mathcal{C}\rightarrow \mathcal{D}$ consists of the following maps:
    \begin{itemize}
        \item an \define{object map} $c\mapsto F(c)$ for all $c\in \textnormal{Ob}(\mathcal{C})$,
        \item a \define{morphism map} respecting domains and codomains, i.e., $(f: c \to c') \mapsto (F(f): F(c) \to F(c'))$ for every morphism $f\maps c\to c'$ in $\mathcal{C}$.
    \end{itemize}
    These maps must satisfy the following equations:
    \begin{itemize}
        \item $F(\text{id}_c) = \text{id}_{F(c)}$ (identities are preserved),
        \item $F(g\circ f) = F(g)\circ F(f)$ (composition is preserved),
    \end{itemize}
    for all objects $c$ and composable morphisms $f,g$ in $\mathcal{C}$.
\end{definition}
As an example, there is a functor from $\textbf{Set}$ to $\textbf{Rel}$ that is the identity on objects and sends a function $f\maps X\rightarrow Y$ to its graph, i.e., the relation $\{(x,y)\in X\times Y \mid y = f(x)\}$.

\noindent

\section{Categories of Convex Bifunctions}
\label{sec:ConvCat}
Now that we have established the necessary category-theoretic background, we are ready to solve Problem \ref{prob:1}, namely, modeling the composition of convex programs
using categories. 
According to Definition \ref{def:cat}, morphisms in a category are required to be composable, so our solution to this problem will consist of constructing a category whose morphisms represent convex optimization problems.

\vspace{-2ex}
\subsection{Convex bifunctions}
The framework we develop is based on a formalization of convex optimization problems, called \textit{convex bifunctions}, introduced by Rockafellar \cite[\S29]{theRock}.

\begin{definition}\label{def:bifunction}
    A \define{convex bifunction} $F\maps \R^m\hto \R^n$ is a (jointly) convex function of form $F\maps \R^m \times \R^n \rightarrow \R\cup \{\pm\infty\}$.
    
    
\end{definition}

Henceforth, we denote the extended reals $\R\cup\{\pm\infty\}$ by $\overline{\R}$. Our purpose in introducing convex bifunctions is to represent minimization problems that are 
parameterized in the constraint space. For example, consider a general convex program $(P)$ over a vector of decision variables $x\in \R^n$:
\begin{equation*}
    \begin{array}{ll@{}ll}
    \text{minimize} & f_0 & (x) & \\
    \text{subject to} & f_i & (x) \leq u_i, & i=1,\dots,r \\
    & f_j & (x) = u_j, & j=r+1,\dots,m. \\
    \end{array}
\end{equation*}
Here $f_i$ is a convex function $\R^n\rightarrow \overline{\R}$ for each $i\in\{0,\dots,r\}$ and $f_j$ is an affine function $\R^n\rightarrow\overline{\R}$ for each $j\in\{r+1,\dots,m\}$. Writing $u \coloneqq (u_1,\dots,u_m) \in \R^m$ for the constraint parameters, let $Su$ denote the set of vectors $x \in \R^n$ 
that satisfy the above constraints for that choice of $u$. Different values of $u$ correspond to different parameterizations of the constraints of $(P)$.
We can encapsulate this whole family of optimization problems into a convex bifunction $F\maps \R^m\hto \R^n$ by setting
    $F(u,x) \coloneqq f_0(x) + \delta(x \mid Su)$,
where $\delta(x \mid Su)$ denotes the convex indicator function
\begin{equation*}
    x\mapsto 
    \begin{cases}
        0 & \text{if } x \in Su,\\
        +\infty & \text{otherwise}.
    \end{cases}
\end{equation*}
We call $F$ the convex bifunction associated with the convex program $(P)$. 
\begin{remark}
\label{remark:representation}
In general, any convex program is uniquely determined by its associated bifunction (see 
\cite[\S29]{theRock}). For our purposes, convex bifunctions will be sums where some terms in the sum are indicator functions and others are not. The convex program associated with a given convex bifunction is one whose hard constraints are satisfied when all indicator function terms take the value $0$, implying the program's objective function is given by the non-indicator function terms of the bifunction. 
\end{remark}

It is useful to note that convex bifunctions generalize linear transformations. In other words, there is a canonical way to embed linear transformations as convex bifunctions. For a linear map $A\maps X\rightarrow Y$, the \emph{convex indicator bifunction of the linear transformation} is the bifunction $\delta_A\maps X\hto Y$ with
\begin{equation*}
    \delta_A(x,y) \coloneqq \delta(y \mid Ax) \coloneqq \left\{
    \begin{array}{cr}
         0 & \text{if } y = Ax,  \\
         +\infty & \text{if } y \neq Ax. \\ 
    \end{array}
    \right.
\end{equation*}

Since convex bifunctions can be identified with convex programs, defining a composition operation for bifunctions directly yields a composition operation for convex programs. Given convex bifunctions $F\maps U\hto X$ and $G\maps X \hto Y$, their composite is defined by
\begin{equation} \label{eq:inf_mult}
    (G\circ F)(u,y) \coloneqq \inf_{x\in X}\{F(u,x) + G(x,y)\}, 
\end{equation}
 for all $u\in U$ and $y\in Y$. We refer to this operation as \define{bifunction composition}. We adopt the convention that $\inf f =-\infty$ when $f$ is unbounded below. 
 
 Bifunction composition generalizes composition of linear transformations, i.e., $\delta_{A_2}\circ\delta_{A_1} = \delta_{A_2\circ A_1}$ for composable linear maps $A_1$ and $A_2$. It also generalizes the relational composition given by~\eqref{eq:relcomp} when relations forming a convex subset are encoded by their convex indicator bifunctions. In that case bifunction composition agrees with relation composition.
Crucially, when viewed as representing convex programs, bifunction composition captures the notion that one problem's decision variables parameterize the constraints of the subsequent problem.
\vspace{-2ex}
\subsection{The SMC of Convex Bifunctions}
We next show that bifunction composition in fact
gives rise to a category of convex bifunctions; the construction of this category will solve Problem~\ref{prob:1}. 

\begin{restatable}{theorem}{convCategory}
    There is a category, \emph{\textbf{Conv}}, in which
    \begin{itemize}
        \item objects are Euclidean spaces,
        \item an arrow from $\R^m$ to $\R^n$ is a convex bifunction $\R^m\hto \R^n$,
        \item the identity bifunction on $X$ is
        \begin{equation*}
            \textnormal{id}_X(x,x') \coloneqq
    \begin{cases}
        0, & \text{if } x = x' \\
        +\infty, & \text{otherwise}
    \end{cases}, \qquad x,x'\in X,
        \end{equation*}
        \item the composition of two bifunctions is given by~\eqref{eq:inf_mult}.
    \end{itemize}
\end{restatable}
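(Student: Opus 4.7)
The plan is to verify the four pieces required by Definition \ref{def:cat}: (i) the putative identities are convex bifunctions, (ii) the composition operation in \eqref{eq:inf_mult} produces a convex bifunction whenever its inputs are, (iii) unitality, and (iv) associativity. The objects (Euclidean spaces) and the typing of morphisms need no checking, so the content is entirely in these four items.

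For (i), the identity $\text{id}_X$ is the convex indicator of the diagonal $\Delta_X = \{(x,x') \in X\times X : x = x'\}$, and since $\Delta_X$ is a linear subspace, the indicator function $\delta(\cdot \mid \Delta_X)$ is jointly convex, so $\text{id}_X$ is a bona fide convex bifunction. For (ii), given $F\maps U\hto X$ and $G\maps X\hto Y$, define the auxiliary function $H\maps U\times X\times Y\to\overline{\R}$ by $H(u,x,y) \coloneqq F(u,x) + G(x,y)$. Joint convexity of $F$ and $G$ make $H$ jointly convex in $(u,x,y)$, and then $(G\circ F)(u,y) = \inf_{x\in X} H(u,x,y)$ is the partial infimum of a jointly convex function over one of its arguments. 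This is precisely the classical fact that partial minimization of a jointly convex function yields a jointly convex function; I would cite Rockafellar~\cite[Thm.~5.7]{theRock} rather than reprove it. I expect this to be the main technical point where care is needed, because the conventions around $-\infty$ values and unbounded infima must be handled so that $G\circ F$ lands in $\overline{\R}$; adopting the stated convention $\inf f = -\infty$ when $f$ is unbounded below makes the argument go through without further fuss.

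For (iii), I would compute $(F\circ \text{id}_X)(x,y) = \inf_{x'\in X}\{\text{id}_X(x,x') + F(x',y)\}$ and observe that the summand is $+\infty$ unless $x' = x$, so the infimum is attained at $x' = x$ and equals $F(x,y)$. The identity $\text{id}_Y \circ F = F$ is symmetric. For (iv), given $F\maps U\hto X$, $G\maps X\hto Y$, $H\maps Y\hto Z$, I would unfold both sides:
\begin{equation*}
((H\circ G)\circ F)(u,z) = \inf_{x} \Bigl\{ F(u,x) + \inf_{y}\{G(x,y) + H(y,z)\}\Bigr\}
\end{equation*}
and
\begin{equation*}
(H\circ (G\circ F))(u,z) = \inf_{y} \Bigl\{ \inf_{x}\{F(u,x) + G(x,y)\} + H(y,z)\Bigr\}.
\end{equation*}
Pushing the outer sum inside the inner infimum in each expression (which is valid since the other term does not depend on the inner variable) reduces both to the joint infimum $\inf_{(x,y)\in X\times Y}\{F(u,x) + G(x,y) + H(y,z)\}$, which agrees by Fubini-type commutation of infima in $\overline{\R}$. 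This establishes associativity.

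The overall structure is thus short: items (iii) and (iv) are routine manipulations with infima and indicator functions, while item (i) is immediate. The one substantive step is (ii), the preservation of joint convexity under partial minimization, and the sole subtlety there is book-keeping for extended-real values; this is the step I would flag as the main obstacle and handle by appeal to the standard result in convex analysis.
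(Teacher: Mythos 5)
Your proposal is correct and follows essentially the same route as the paper's proof: closure of convexity under composition is delegated to Rockafellar (the paper cites \S38.5, the partial-minimization fact you invoke), unitality is verified by the same direct computation showing the infimum is forced to $x'=x$, and associativity is reduced to commuting infima with extended-real conventions handling the unbounded cases. Your write-up is somewhat more explicit than the paper's (which dismisses associativity as ``straightforward''), but there is no substantive difference in approach.
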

\proof
    Given a composable pair of convex bifunctions, their composite is again convex (\cite{theRock}, \S38.5), so the composition operation is well-defined. 
    
    Associativity is straightforward in the case that all infima in the resulting composites are attained.
    Furthermore, using the convention that $\infty + (-\infty) = \infty$ to interpret $F(u,x) + G(x,y)$ where necessary, associativity is easily extended to the unbounded case.

    For unitality, consider a bifunction $F\maps \R^m \hto \R^n$. Then we have 
        $(F\circ \id_{\R^m})(u,x) = \inf_{u'\in \R^m}(\id_{\R^m}(u,u') + F(u',x))$.
    Clearly, the infimum is attained for $u'=u$ because otherwise $\id_{\R^m}(u,u')=\infty$. So $(F\circ \id_{\R^m})(u,x) = 0 + F(u,x) = F(u,x)$. A similar argument shows that $F = \id_{\R^n} \circ F$.
\endproof


Composition in $\Conv$ formalizes a notion of sequential coupling between subproblems. To formulate MPC in this context, we also need the ability to compose convex bifunctions in parallel by combining two independent programs into one that is separable. This requires an additional structure on $\Conv$ called a \emph{monoidal product}. Monoidal products are in turn defined in terms of a functor out of a product category. So we first define the product of categories, which generalizes the familiar Cartesian product of sets.

\begin{definition}
\label{def:product}
    Given categories $\mathcal{C}$ and $\mathcal{D}$, their \define{product category}, denoted $\mathcal{C}\times \mathcal{D}$, is a category in which
    \begin{itemize}
        \item objects are pairs $(c,d)$, where $c\in \text{Ob}(\mathcal{C})$ and $d\in \text{Ob}(\mathcal{D})$, i.e., $\text{Ob}(\mathcal{C}\times \mathcal{D}) = \text{Ob}(\mathcal{C}) \times \text{Ob}(\mathcal{D})$,
        \item a morphism $(c_1,d_1)\rightarrow (c_2,d_2)$ is a pair $(f,g)$, where $f\maps c_1\rightarrow c_2$ is a morphism in $\mathcal{C}$ and $g\maps d_1\rightarrow d_2$ is a morphism in $\mathcal{D}$,
        \item composition and identities are defined componentwise:
        \begin{equation*}
            (f_2,g_2)\circ (f_1,g_1) \coloneqq (f_2\circ f_1, g_2\circ g_1),
            \ 
            \id_{(c,d)} \coloneqq (\id_c,\id_d).
        \end{equation*}
    \end{itemize}
\end{definition}
\noindent
\vspace{-5ex}
\begin{definition} \label{def:monoidal}
    A \define{strict monoidal category} is a category $\mathcal{C}$ equipped with a functor $\otimes:\mathcal{C}\times \mathcal{C}\rightarrow \mathcal{C}$, called the \define{monoidal product}, and a distinguished object $I\in \textnormal{Ob}(\mathcal{C})$, called the \define{monoidal unit}, such that for all objects $A,B,C$ and morphisms $f,g,h$ in $\mathcal{C}$, the following equations hold:
    \begin{itemize}
        \item $(A\otimes B)\otimes C = A\otimes(B\otimes C)$,
        \item $A\otimes I = A$,
        \item $I\otimes A = A$,
        \item $(f\otimes g)\otimes h = f\otimes(g\otimes h)$,
        \item $\text{id}_I \otimes f = f$,
        \item $f \otimes \text{id}_I = f$.
    \end{itemize}
\end{definition}
Monoidal categories are typically written as a triple comprising the category, its monoidal product, and its unit, e.g., $(\mathcal{C},\otimes, I)$. If for every pair of objects $A,B$ in a monoidal category, $A\otimes B\cong B\otimes A$, then it is a \define{symmetric}\footnote{The swap maps $\sigma_{A,B}\maps A\otimes B\cong B\otimes A$ must also obey several coherence conditions which we omit for brevity.} monoidal category. Monoidal categories formalize contexts which admit both (i) sequential 
combinations of arrows through composition and (ii) parallel combinations of arrows through the monoidal product. The next theorem shows how to make $\Conv$ into a monoidal category.

\begin{restatable}{theorem}{convSMC} \label{thm:convSMC}
    Let $\oplus$ be the functor from $\Conv\times\Conv$ to $\Conv$ which takes pairs of Euclidean spaces $(\R^n,\R^m)$ to their direct sum $\R^n\oplus \R^m$ and takes bifunction pairs of the form $(F\maps \R^m\hto \R^n, G\maps \R^p\hto \R^q)$ to their pointwise sum, i.e., $F\oplus G\maps\R^m\oplus\R^p\hto\R^n\oplus\R^q$, where 
    \begin{multline*}
    (F\oplus G)((u,y),(x,z)) = 
        F(u,x) + G(y,z) \\ \text{  for all } u\in\R^m,x\in\R^n,y\in\R^p, \textnormal{and } z\in\R^q.
    \end{multline*}
    Then $(\Conv, \oplus, \R^0)$ is a strict symmetric monoidal category.
\end{restatable}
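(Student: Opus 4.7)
My plan is to verify first that $\oplus$ defines a bona fide functor from $\Conv\times\Conv$ to $\Conv$, then check the strict monoidal axioms on objects and morphisms, and finally exhibit the symmetry structure. For functoriality, I would begin by observing that $\oplus$ is well-defined on morphisms: the pointwise sum of two jointly convex bifunctions is jointly convex, so $F\oplus G$ is again a convex bifunction. Identity preservation, $\id_{\R^n}\oplus\id_{\R^m}=\id_{\R^n\oplus\R^m}$, is immediate from unpacking the definitions, since both sides evaluate to $0$ on the diagonal of $(\R^n\oplus\R^m)\times(\R^n\oplus\R^m)$ and to $+\infty$ elsewhere.

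The main computation is composition preservation: for $F_1\maps\R^m\hto\R^n$, $F_2\maps\R^n\hto\R^p$, $G_1\maps\R^q\hto\R^r$, $G_2\maps\R^r\hto\R^s$, I need
\begin{equation*}
(F_2\oplus G_2)\circ(F_1\oplus G_1)=(F_2\circ F_1)\oplus(G_2\circ G_1).
\end{equation*}
Expanding the left side with~\eqref{eq:inf_mult} yields an infimum over $(x,y)\in\R^n\oplus\R^r$ of a sum that cleanly splits into $F_1(u,x)+F_2(x,w)$ (depending only on $x$) plus $G_1(v,y)+G_2(y,z)$ (depending only on $y$). Because the two summands depend on disjoint variables, the joint infimum decomposes as a sum of separate infima over $x$ and $y$, giving exactly the right-hand side.

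For the strict monoidal axioms on objects, associativity $(\R^a\oplus\R^b)\oplus\R^c=\R^a\oplus(\R^b\oplus\R^c)$ and the unit laws $\R^a\oplus\R^0=\R^a=\R^0\oplus\R^a$ follow from the corresponding equalities for direct sum of Euclidean spaces under the standard identification. Associativity and unit laws on morphisms reduce to the analogous properties of pointwise addition of real-valued functions; the unit law $f\oplus\id_{\R^0}=f$ uses that $\id_{\R^0}$ is the constant-$0$ bifunction $\R^0\times\R^0\to\overline{\R}$. For symmetry, I would define the swap $\sigma_{\R^n,\R^m}\maps\R^n\oplus\R^m\hto\R^m\oplus\R^n$ as the convex indicator bifunction $\delta_S$ of the linear swap map $S\maps(x,y)\mapsto(y,x)$, and verify naturality in both arguments together with the hexagon and involutive coherence axioms. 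These all reduce, via the embedding of linear maps as indicator bifunctions and the fact that $\delta_{A_2}\circ\delta_{A_1}=\delta_{A_2\circ A_1}$ noted in the text, to the corresponding statements for the linear swap maps, which are standard.

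The main obstacle I anticipate is the composition-preservation calculation in the extended-reals setting. The identity that infima separate across disjoint variables is intuitive, but care is needed when some terms take the value $\pm\infty$; one must consistently apply the convention $\infty+(-\infty)=\infty$ introduced in the proof of $\Conv$'s category axioms so that both sides of the decomposition agree even in the unbounded case. Once this bookkeeping is handled, the remaining strict monoidal and symmetry axioms follow by routine computations from the underlying properties of direct sum, pointwise addition, and linear maps.
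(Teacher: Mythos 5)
Your proposal is correct and follows essentially the same route as the paper's proof: functoriality of $\oplus$ via identity preservation and the infimum-splitting computation for composition, strict monoidal axioms by reduction to arithmetic of direct sums and pointwise addition, and symmetry via the swap isomorphisms $\R^n\oplus\R^m\cong\R^m\oplus\R^n$. You are somewhat more explicit than the paper in realizing the swap as the indicator bifunction $\delta_S$ of the linear swap map and in flagging the $\infty+(-\infty)=\infty$ convention needed for the infimum to separate in the extended-reals setting, but these are refinements of, not departures from, the published argument.
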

\begin{proof}
    To show that $\oplus\maps \textbf{Conv}\times \textbf{Conv}\rightarrow \textbf{Conv}$ is a functor, first note that identities are preserved since
    \begin{align*}
            (\id_{\R^n} &\oplus \id_{\R^m})((x,y),(x',y')) \\
             &= \left\{
    \begin{array}{lr}
         0 & \text{if } x = x'  \\
         +\infty & \text{otherwise} \\ 
    \end{array}
    \right.\,\, +\,\, \left\{
    \begin{array}{lr}
         0 & \text{if } y = y'  \\
         +\infty & \text{otherwise} \\ 
    \end{array}
    \right.\\
     &= \left\{
    \begin{array}{lr}
         0 & \text{if } x = x' \textnormal{ and } y = y'  \\
         +\infty & \text{otherwise} \\ 
    \end{array}
    \right.\\
     &= \id_{\R^n\oplus\R^m}((x,y),(x',y')).
    \end{align*}
    To see that $\oplus$ preserves composition, we need to show that 
    \begin{equation*}
        (F_2\circ F_1) \oplus (G_2\circ G_1) = (F_2\oplus G_2)\circ(F_1\oplus G_1)
    \end{equation*}
    for composable morphisms $F_1,F_2,G_1,G_2$ in $\Conv$. We have
    \begin{multline*}
           ((F_2\circ F_1) \oplus (G_2\circ G_1))((x_1,y_1),(x_2,y_2)) \\
            = \inf_{x'}(F_1(x_1,x')+F_2(x',x_2)) \\
            + \inf_{y'}(G_1(y_1,y') + G_2(y',y_2)) \\
            = \inf_{x',y'}\Big(F_1(x_1,x')+G_1(y_1,y') \\
            \hspace{33mm}+F_2(x',x_2)+G_2(y',y_2)\Big) \\
            = ((F_2\oplus G_2)\circ (F_1\oplus G_1))((x_1,y_1),(x_2,y_2)),        
    \end{multline*}
    as desired. The associativity and unitality conditions of $\oplus$ required by Definition \ref{def:monoidal} follow readily from the basic rules of arithmetic. The symmetric structure follows from the isomorphisms $\R^n\oplus \R^m\cong \R^m\oplus \R^n$ for all $n,m\in \mathbb{N}$.
\end{proof}

One benefit of monoidal categories is that they possess an intuitive yet formal graphical calculus called \emph{string diagrams} \cite{Selinger_2010} to represent composite morphisms. In a string diagram, strings (or wires) represent objects and boxes represent morphisms. For example, suppose $f\maps U\hto W\oplus X$, $g\maps X\hto Y$, and $h\maps W\oplus Y\hto Z$ are convex bifunctions in $(\Conv,\oplus,\R^0)$. Then the string diagram

\begin{center}
\includegraphics[scale=.3]{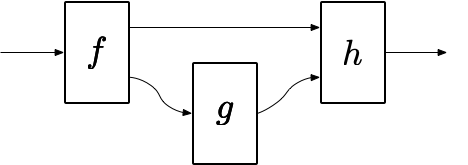}
\end{center}
represents the composite morphism $h\circ (\text{id}_W \oplus g)\circ f$. In this composite, the decision variables of $f$ perturb the constraints of $g$ and $h$, while $h$'s constraints are also perturbed by $g$'s decision variables.

\vspace{-2ex}
\subsection{Parameterized convex bifunctions}

The category \textbf{Conv} provides a formal setting for composing optimization problems such that one problem's decisions parameterize another problem's constraints. MPC optimization problems exhibit this compositional structure, as the state variables at time-step $k$ are computed by the $k^{th}$ MPC subproblem, 
while they are fixed from the perspective of the~$(k+1)^{th}$ subproblem and
serve to parameterize the constraints of the $(k+1)^{th}$ subproblem. However, the setting of $\Conv$ leaves out the important role played by the control variables. These are decision variables which are independent for each subproblem, i.e., they do not couple across time like the state does. 
As stated, $\Conv$ can model the coupling between state variables in MPC but cannot model this independence of the control variables.
To capture this semantic distinction, we review another categorical construction called a \emph{parameterized category} \cite{lenslearn} and then apply it to $\Conv$ to produce the desired results.

\begin{definition}
\label{def:para}
    If $(\mathcal{C},\otimes, I)$ is a strict monoidal category, then the \define{parameterized category} $\Para(\mathcal{C})$ is a category in which:
    \begin{itemize}
        \item the objects are those of $\mathcal{C}$,
        \item a morphism $X\rightarrow Y$ is a pair $(U,f)$ where $U$ is an object of $\mathcal{C}$ and $f\maps U\otimes X\rightarrow Y$ is a morphism in $\mathcal{C}$,
        \item the identity on $X$ is $(I,\id_X)$,
        \item the composite of $(U_1,f)\maps X\rightarrow Y$ and $(U_2,g)\maps Y\rightarrow Z$ is the pair $\big(U_2\otimes U_1, g\circ (\id_{U_2}\otimes f)\big)$.
    \end{itemize}
    %
\end{definition}

Since $\Conv$ is a monoidal category by Theorem \ref{thm:convSMC}, we can directly apply the Para construction to get a new category $\Para(\Conv)$ of \emph{parameterized} convex bifunctions. 
Explicitly, objects in $\Para(\Conv)$ remain Euclidean spaces, while morphisms are pairs of the form $(U, F\maps U\oplus X\hto Y)$ for Euclidean spaces $U$, $X$, and $Y$.
The parameter space $U$ will be used to model the control inputs to the MPC optimization problem.

To better understand the definition of composites in $\Para(\Conv)$, we can visualize them in terms of string diagrams as in \cite{lenslearn}. We draw a parameterized bifunction as a string diagram where the domain and codomain objects are as usual, but the parameter object is represented by a vertical arrow coming into the morphism from above. This serves to make the visual representation of parameters distinct from a morphism's domain and codomain. For example, the morphism $(U_1, f)\maps X\rightarrow Y$ in $\Para(\Conv)$ is drawn as
\begin{center}
\includegraphics[scale=.3]{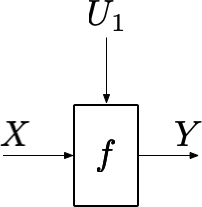}.
\end{center}
Then composites are drawn simply as
\begin{center}
\includegraphics[scale=.3]{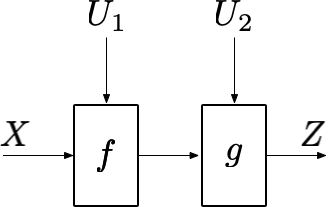}.
\end{center}
\vspace{-3ex}
\section {MPC Optimization Problems are Morphisms}
\label{sec:MPCinConv}

With the ability to compose convex optimization problems according to the rules of \textbf{Conv} and the necessary semantic distinction between control variables and state variables formalized in $\Para(\Conv)$, we are ready to describe the categorical structure of the MPC optimization problem. Specifically, given a convex MPC problem $(Q)$ of the form in Definition \ref{def:mpc}, there exists a morphism $(U,G_Q)$ in $\Para(\Conv)$ such that the optimization problem associated with $Q$ for a prediction horizon of $N$ is the $N$-fold composite $\underbrace{(U,G_Q)\circ\dots \circ (U,G_Q)}_{N \text{ times}}$ which we denote $(U,G_{Q})^N$. 

Recall that at a single time-step $i$ in the prediction horizon,  we have the one-step MPC subproblem
\begin{equation}
\label{eq:onestepprob}
    \begin{array}{ll@{}ll}
    \text{minimize} &  \ell (x_i, u_i) & \\
    \text{subject to} & x_{i+1}=f_d(x_i, u_i) \\
    & g(x_i, u_i) \leq 0.
    \end{array}
\end{equation}
This problem can be interpreted as optimizing over the control input at a single time-step, i.e.,
with a prediction horizon of length~$N=1$. We can reformulate this single timestep problem into a bifunction.

\begin{definition}
\label{def:onestep}
    Given a discrete dynamical system $f_d \maps X\times U\rightarrow X$, cost function $\ell\maps X\times U\rightarrow \R$, and constraint function $g\maps X\times U\rightarrow \R^m$, the \emph{one-step bifunction} representing the one-step subproblem (\ref{eq:onestepprob}) is $G\maps U\oplus X\hto X$, where
    \begin{multline}    
        G((u, x), x') \coloneqq \ell(x,u) + \delta\Bigl(x' \mid f_d(x,u)\Bigr) \\
         + \delta(x,u \mid g(x,u) \leq 0).
    \end{multline}
    Here, $\delta(x,u \mid g(x,u) \leq 0)$ denotes the indicator function of the set $\{(x,u)\in X\times U \mid g(x,u)\leq 0\}$.
\end{definition}
We next show that, under suitable convexity conditions, the one-step bifunction associated with an MPC
subproblem can be regarded as belonging to $\Para(\Conv)$. 

\begin{restatable}{proposition}{genconv}
\label{prop:genconv}
    If $f_d\maps X\times U\rightarrow X$ is affine and the functions $\ell\maps X\times U\rightarrow \R$ and $g\maps X\times U\rightarrow \R^m$ are convex, 
    then the pair $(U,G\maps U\oplus X\hto X)$, where $G$ is the one-step bifunction of the MPC subproblem associated with $f_d,\ell,$ and $g$, is a morphism from $X$ to $X$ in \emph{$\Para(\Conv)$}.
\end{restatable}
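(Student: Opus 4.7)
The plan is to unpack the definitions and verify that $G$, as defined in Definition~\ref{def:onestep}, is a bona fide convex bifunction $U\oplus X \hto X$; once that is shown, the pair $(U,G)$ automatically qualifies as a morphism $X\to X$ in $\Para(\Conv)$ by Definition~\ref{def:para}, with parameter object $U$.

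The first step is to reduce the claim to verifying joint convexity of $G$ as a function of the combined variable $((u,x),x') \in (U\oplus X)\oplus X$, since this is exactly what it means for $G$ to be a morphism in $\Conv$ per Definition~\ref{def:bifunction}. I would then write $G$ as a sum of three terms, $G = G_1 + G_2 + G_3$, where $G_1((u,x),x') = \ell(x,u)$, $G_2((u,x),x') = \delta\bigl(x' \mid f_d(x,u)\bigr)$, and $G_3((u,x),x') = \delta(x,u \mid g(x,u)\leq 0)$, and show that each summand is jointly convex in $(u,x,x')$.

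The verification then proceeds summand by summand. For $G_1$, joint convexity in $(u,x,x')$ follows from convexity of $\ell$ in $(x,u)$ combined with the trivial observation that constancy in $x'$ preserves convexity. For $G_2$, note that because $f_d$ is affine, the graph $\{(u,x,x') : x' = f_d(x,u)\}$ is an affine subspace of $U\oplus X\oplus X$, and the indicator function of any affine (in particular convex) set is a convex function into $\overline{\R}$. For $G_3$, the set $\{(u,x,x') : g(x,u)\leq 0\}$ is (the preimage under a projection of) the $0$-sublevel set of the convex function $g$, which is convex, so its indicator is likewise convex. Joint convexity of $G$ then follows from the fact that the pointwise sum of jointly convex $\overline{\R}$-valued functions is again jointly convex, using the convention $\infty + (-\infty) = \infty$ introduced in the proof of the $\Conv$ category theorem to handle the extended-real-valued summands.

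The main obstacle, if any, is bookkeeping: one needs to be careful that each indicator function is genuinely convex as a function on the full space $U\oplus X\oplus X$ rather than only on a slice, and that the three summands' effective domains are compatible so that their sum is not identically $+\infty$ (ensuring properness in nontrivial cases, though properness is not itself required by the definition of a convex bifunction). Once joint convexity is established, the conclusion is immediate: $G$ is a convex bifunction $U\oplus X \hto X$ in $\Conv$, so $(U,G)$ is a morphism from $X$ to $X$ in $\Para(\Conv)$ by Definition~\ref{def:para}.
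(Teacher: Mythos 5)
Your proposal is correct and follows essentially the same route as the paper: check that $(U,G)$ has the right signature per Definition~\ref{def:para}, then observe that $G$ is convex as a sum of a convex function and indicators of convex sets (the affine graph of $f_d$ and the sublevel set of $g$). Your version merely spells out the summand-by-summand convexity check that the paper compresses into one sentence.
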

\begin{proof}
    Recall that by Definition \ref{def:para}, a morphism $X\rightarrow X$ in $\Para(\Conv)$ is a pair $(U', F\maps U'\oplus X\hto X)$ for some choice of control space $U'$ and bifunction $F$. Choosing $U$ as the control space and $G$ as the bifunction, $(U,G)$ has the correct signature of a morphism $X\rightarrow X$ in $\Para(\Conv)$. Convexity of~$G$ follows immediately from being the sum of a convex function with indicators of convex sets.
\end{proof}

Now that we have an endomorphism\footnote{Endormorphisms are morphisms whose domain and codomain are equal.}
$(U,G)\maps X\rightarrow X$ in $\Para(\Conv)$, we can compose $(U,G)$ with itself an arbitrary number of times. We use this property to formally express the compositional structure of MPC optimization problems, thus solving Problem \ref{prob:3}.
 
\begin{restatable}{theorem}{nfoldcomp}
\label{thm:nfoldcomp}
    Let $\ell\maps X\times U\rightarrow\R$ and $g\maps X\times U\rightarrow \R^m$ be convex functions and let $f_d\maps X\times U\rightarrow X$ be an affine function. If $(U,G)\maps X\rightarrow X$ is the one-step bifunction in $\Para(\Conv)$ generated by these functions, then $(U,G)^N$ is the parameterized convex bifunction representation of the MPC optimization problem associated with $f_d, \ell$, and $g$ over a prediction horizon of $N$.
\end{restatable}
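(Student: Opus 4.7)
The proof will proceed by induction on the prediction horizon $N$. For the base case $N=1$, the composite $(U,G)^1 = (U,G)$ is, by Definition \ref{def:onestep} and Proposition \ref{prop:genconv}, precisely the parameterized convex bifunction representing the single-stage MPC subproblem \eqref{eq:onestepprob}, which is the MPC problem over a horizon of length one.

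For the inductive step, I would assume that $(U,G)^N = (U^{\oplus N}, G^{(N)})$ represents the horizon-$N$ MPC problem and compute
\begin{equation*}
(U,G)^{N+1} = (U,G) \circ (U,G)^N = \bigl(U^{\oplus(N+1)},\; G \circ (\id_U \oplus G^{(N)})\bigr)
\end{equation*}
using the composition rule in Definition \ref{def:para}. Expanding the underlying bifunction via \eqref{eq:inf_mult} yields
\begin{multline*}
\bigl(G\circ(\id_U \oplus G^{(N)})\bigr)((u_N,\ldots,u_0, x_0),\, x_{N+1}) \\
=\; \inf_{u,\, x_N}\bigl\{\id_U(u_N, u) + G^{(N)}((u_{N-1},\ldots,u_0, x_0),\, x_N) \\ +\, G((u, x_N),\, x_{N+1})\bigr\}.
\end{multline*}
Because $\id_U$ is the indicator of equality, the infimum over $u$ forces $u = u_N$, leaving an infimum over the single intermediate state $x_N$.

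The induction hypothesis supplies an explicit form for $G^{(N)}$ as a sum of the running costs $\ell(x_i, u_i)$ for $i=0,\ldots,N-1$, the per-stage feasibility indicators $\delta(x_i, u_i \mid g(x_i, u_i)\leq 0)$, and a terminal dynamics indicator $\delta(x_N \mid f_d(x_{N-1}, u_{N-1}))$, with each intermediate $x_i$ recursively substituted by the dynamics $x_i = f_d(x_{i-1}, u_{i-1})$. The terminal indicator inside $G^{(N)}$ forces the infimum over $x_N$ to be attained at $x_N = f_d(x_{N-1}, u_{N-1})$. Substituting this value and expanding $G((u_N, x_N), x_{N+1})$ via Definition \ref{def:onestep} appends a new running cost $\ell(x_N, u_N)$, a feasibility indicator $\delta(x_N, u_N \mid g(x_N, u_N)\leq 0)$, and a fresh terminal dynamics indicator $\delta(x_{N+1} \mid f_d(x_N, u_N))$. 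By Remark \ref{remark:representation}, the resulting bifunction is exactly the representation of the horizon-$(N+1)$ MPC problem from Definition \ref{def:mpc}, closing the induction.

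The main obstacle I anticipate is the careful bookkeeping of indicator functions and justifying that each nested infimum over an intermediate state is attained at a unique value, which ultimately follows from the dynamics indicator forcing uniqueness of the minimizer at each stage. A secondary subtlety lies in the reassociation and ordering of the accumulated parameter space $U^{\oplus N}$ under iterated $\Para$ composition, but this is handled implicitly by the strict symmetric monoidal structure on $\Conv$ established in Theorem \ref{thm:convSMC}, which makes such reassociations the identity.
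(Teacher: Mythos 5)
Your proposal is correct and follows essentially the same route as the paper's proof: induction on $N$, with the base case supplied by Proposition \ref{prop:genconv} and the inductive step carried out by unfolding the bifunction composition \eqref{eq:inf_mult} to absorb one more stage's cost, dynamics indicator, and feasibility indicator. The only differences are cosmetic: you are more explicit about the $\Para$ composition rule (the $\id_U \oplus G^{(N)}$ factor and the forced $u = u_N$), and you optionally eliminate the intermediate states via the dynamics indicators, whereas the paper simply leaves them under the accumulated infimum $\inf_{x_1,\dots,x_k}$; both describe the same bifunction.
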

\begin{proof}
    This proof proceeds by induction on $N$. The base case of $N=1$ is proven by Proposition \ref{prop:genconv}. For the inductive step, suppose $(U,G)^k$ is the parameterized convex bifunction representation of the MPC optimization problem associated with $f_d,\ell,$ and $g$ over a prediction horizon of $k$. We need to show that $(U,G)^{k+1} = (U,G)\circ (U,G)^k$ is the parameterized convex bifunction representation of the MPC problem over a prediction horizon of $k+1$. By the inductive hypothesis, we have that
    \begin{multline}
    \label{IH}
    (U,G)^k((u_{k-1},\dots,u_0,x_0),x_k) = \\
    \inf_{x_1,\dots,x_{k-1}}\Big[\sum_{i=0}^{k-1}\ell(x_i,u_i)+\delta\Bigl(x_{i+1} \mid f_d(x_i,u_i)\Bigr)\\ + \delta\Bigl(x_i,u_i \mid g(x_i,u_i)\leq 0\Bigr)\Big].
    \end{multline}
    Then, by the definition of bifunction composition, we have
    \begin{multline}
    \label{res}
    (U,G)\circ(U,G)^k((u_k,\dots,u_0,x_0),x_{k+1}) = \\
    \inf_{x_k}\Big[\ell(x_k,u_k)+\delta\Bigl(x_{k+1} \mid f_d(x_k,u_k)\Bigr)\\ + \delta\Bigl(x_k,u_k \mid g(x_k,u_k)\leq 0\Bigr) + \text{ RHS of~\eqref{IH}}\Big] \\
    = \inf_{x_1,\dots,x_k}\Big[\sum_{i=0}^{k}\ell(x_i,u_i)+\delta\Bigl(x_{i+1} \mid f_d(x_i,u_i)\Bigr)\\ + \delta\Bigl(x_i,u_i \mid g(x_i,u_i)\leq 0\Bigr) \Big],
    \end{multline}
    which is the parameterized convex bifunction representation of the MPC problem associated with $f_d,\ell,$ and $g$ over a prediction horizon of $k+1$, as desired.
\end{proof}

The theorem shows that any MPC optimization problem of the form in Definition \ref{def:mpc} is represented by repeated composition of an endomorphism in $\Para(\Conv)$ to the desired prediction horizon. Our software implementation takes advantage of this result to ensure that the convex program generated from a specification of the dynamics, one-step cost, and one-step constraints, is correct by construction.

\vspace{-2ex}
\subsection{Modeling initial and terminal constraints and costs}

The formulation so far of MPC optimization problems does not impose any 
restrictions in terms of costs or constraints on the state achieved at the end of the time horizon, though such 
restrictions often arise in practice. 
For example, we may wish to modify the objective function of Example \ref{ex:running} to include a terminal cost term $x_N^TQ_Nx_N$ or add the constraint that $x_N = c$ for some desired target vector $c$. Terminal costs and constraints can be represented by a morphism $(\R^0 ,T)\maps X\rightarrow \R^0$ in $\Para(\Conv)$ for some bifunction $T\maps \R^0\oplus X\hto \R^0$. Since $T$ has trivial control space and codomain, it is equivalent to a convex function $X\rightarrow \overline{\R}$. This is precisely the signature needed to encode a terminal cost or constraint function. We can incoporate this bifunction into our problem by post-composition, i.e.,
    $(\R^0,T)\circ (U,G)^N$.

Similarly, fixing the initial state to a value $x_0$ is accomplished by pre-composition with a morphism $(\R^0,\delta_{x_0})\maps \R^0\rightarrow X$ where
\begin{equation*}
\delta_{x_0}(x)\coloneqq \left\{
    \begin{array}{cr}
         0 & \text{if } x = x_0,  \\
         +\infty & \text{if } x \neq x_0. \\ 
    \end{array}
    \right.
\end{equation*}
The formulation of MPC with initial conditions and required terminal state or cost is thus treated symmetrically by our formalism. The general case is $(\R^0, T) \circ (U,G)^N \circ (\R^0, x_0)$, where composition in $\Para(\Conv)$ correctly models the coupling between initial, sequential, and terminal steps of the control process. This is represented graphically as
\begin{center}
    \includegraphics[scale=.3]{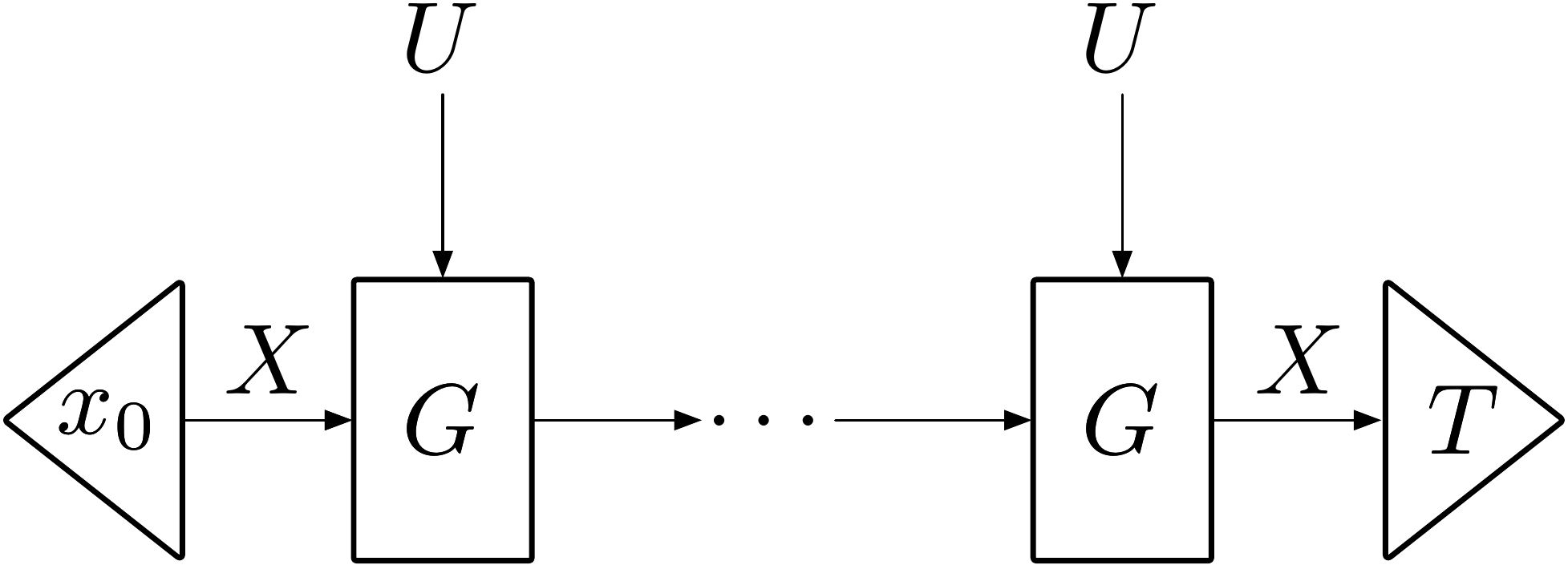},
\end{center}
where opening and closing triangles represent morphisms from $\R^0$ (no inputs) and to $\R^0$ (no outputs) respectively.

\vspace{-2ex}
\subsection{Modeling constraints across timesteps}

Many robotic control applications involve constraints on the state among problem stages separated by more than one timestep. This can be used to encode periodicity constraints, which are useful in applications such as robotic walking, where the position of a robot's leg must periodically return to the same state \cite{yang_equality_2021}. Such constraints can easily be integrated into our framework.

First note that there are natural bifunction encodings of duplicating and merging variables.
\begin{itemize}
    \item Given a Euclidean space $X$, the duplicate map $\Delta_X\maps X\hto X\oplus X$ is given by the bifunction
    \begin{equation*}
        \Delta_X(x, (x_1,x_2))\coloneqq \left\{
    \begin{array}{cr}
         0 & \text{if } x = x_1 = x_2,  \\
         +\infty & \text{otherwise.} \\ 
    \end{array}
    \right.
    \end{equation*}
    \item Given a Euclidean space $X$, the merge map $\mu_X\maps X\oplus X\hto X$ is given by the bifunction
    \begin{equation*}
        \mu_X((x_1,x_2),x)\coloneqq \left\{
    \begin{array}{cr}
         0 & \text{if } x = x_1 = x_2,  \\
         +\infty & \text{otherwise.} \\
         \end{array}
         \right.
    \end{equation*}
\end{itemize}
These copy and merge maps are represented in string diagrams as follows, where the copy map is on the left and the merge map is on the right.
\begin{center}
\includegraphics[scale=.3]{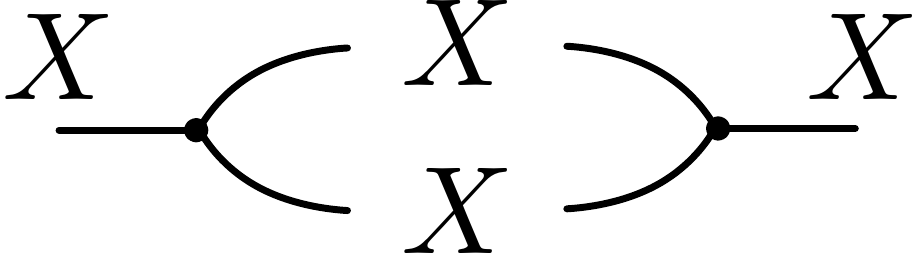}
\end{center}
With these maps, we can represent a cross timestep constraint with diagrams of the following form.
\begin{center}
    \includegraphics[scale=.3]{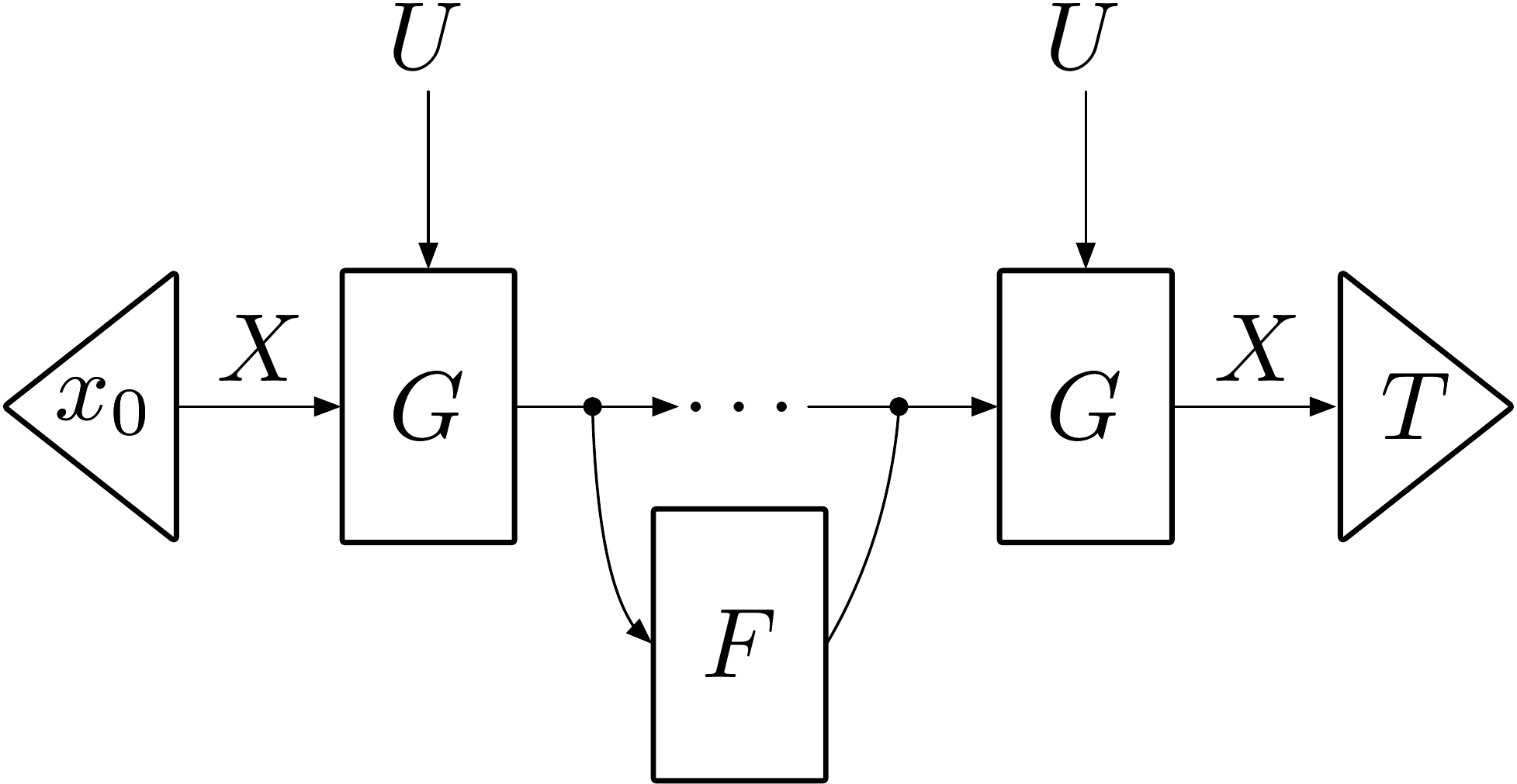}
\end{center}
In this example, the state after 1 timestep and after $N-1$ timesteps are related by the bifunction $F\maps X\hto X$, which can encode an arbitrary cost or constraint term to couple these two state variables.
\vspace{-2ex}
\subsection{Utilizing Hierarchical Composition}

The final type of composition available to us in $\Para(\Conv)$ is hierarchical composition. This gives us a way of modularly building up large problems by nesting collections of subproblems with the correct inputs and outputs within a box in a wiring diagram. To illustrate this, we can consider ``zooming in" on a single $G$ box, and building this morphism from even simpler components.

Recall that a single timestep subproblem is composed of an objective function $\ell$, an inequality constraint function $g$, and an equality constraint function $f_d$. The following diagram shows how we can fill a box representing a single timestep with these component subproblems (where we abuse notation to treat the constraint functions as their associated indicator bifunctions).
\begin{center}
    \includegraphics[scale=.3]{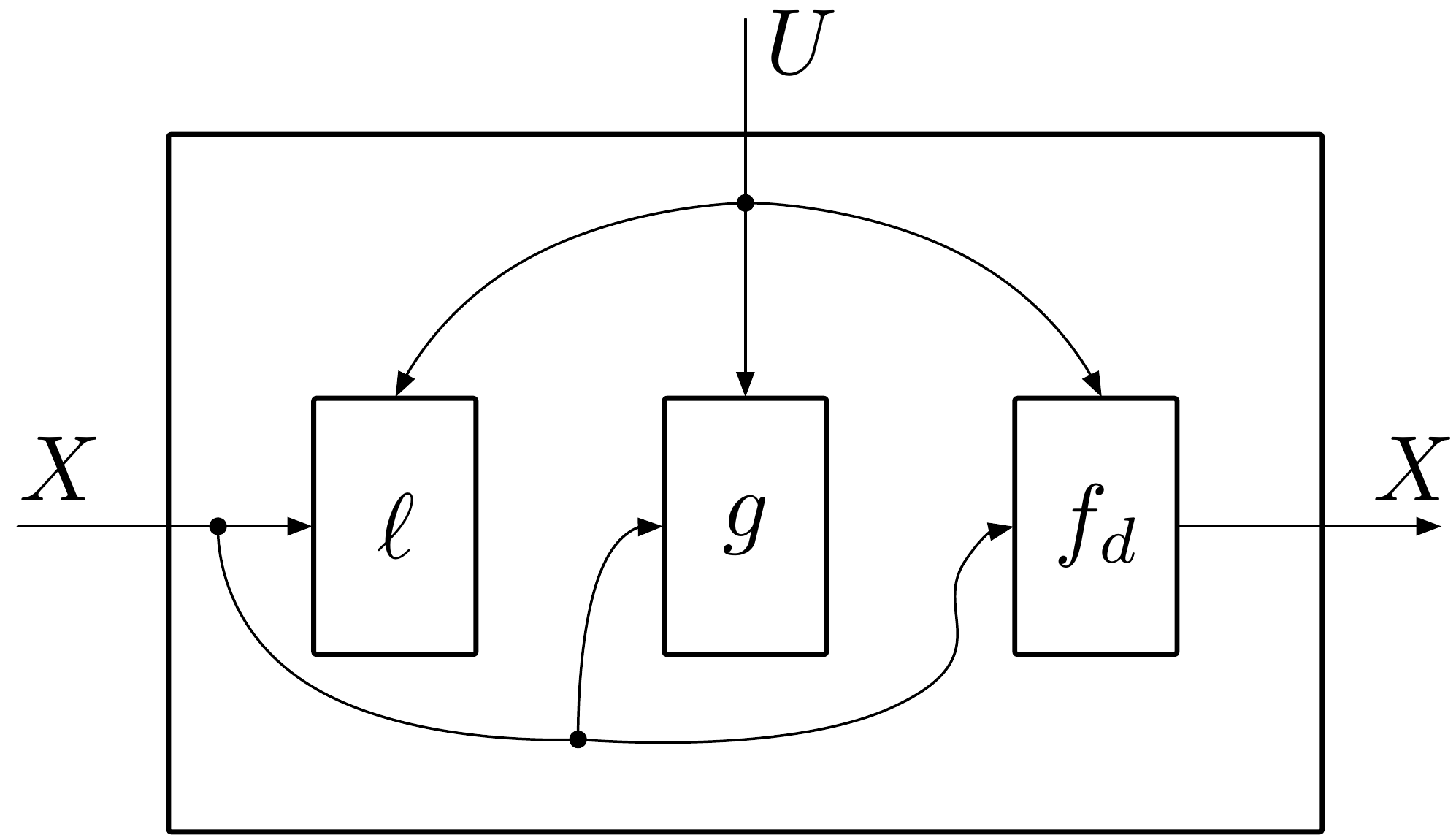}
\end{center}
This allows us to package up multiple morphisms into a modular, reusable component.
\vspace{-2ex}
\section{Implementation}
\label{sec:Impl}

\begin{listing}
\begin{minted}
[
bgcolor=mygray,
fontsize=\footnotesize
]
{julia}
# Declare problem constants.
A, B = [0 1; .01 0], [0; 1]
Q, R = 5*I(2), 3
x0 = [3.0, 1.0]

# Specify system dynamics and one-step cost
# and constraint functions.
cost(u,x) = quadform(x,Q) + R*square(u)
dynamics(u,x) = A*x + B*u
constraints(u,x) = [
    u <= 1, u >= -1,
    x[1] <= 3, x[1] >= -3,
    x[2] <= 2, x[2] >= -2
]

# Create the one-step bifunction.
one_step = one_step_bifunction(dim_x,dim_u,cost, 
    constraints, dynamics)

N = 10 # prediction horizon
MPC_bifunc = compose(repeat([one_step], N))

# Create variables to store control inputs
# and final state achieved.
us = [Variable(1) for i in 1:N-1]
xN = Variable(2)

# Convert to a Convex.jl problem and solve.
MPC_prob = make_problem(MPC_bifunc, us, x0, xN)
solve!(MPC_prob, SCS.Optimizer)
\end{minted}
\vspace{-2ex}
\caption{This code snippet shows how Example \ref{ex:running} is implemented using AlgebraicControl.jl. Note that the dynamics, cost, and constraints are specified in terms of a single time-step. Any prediction horizon can then be chosen for the problem and bifunction composition handles the required book keeping to build up the problem.}
\label{l:good_impl}

\end{listing}

One of the many benefits of framing a domain categorically is that correct and clean implementations should follow easily from the proof that the domain has the structure of a certain type of category. Catlab.jl is a software package written in the Julia programming language with the goal of realizing this benefit \cite{catlab}. Specifically, Catlab allows the programmer to create instances of categorical abstractions such as symmetric monoidal categories, and then provides tools to automatically compile morphisms expressed symbolically or as string diagrams in those categories to Julia code.

To apply this methodology to our framework, we instantiate the theory of symmetric monoidal categories using Convex.jl problems as a software representation for convex bifunctions. Convex.jl is a Julia implementation of disciplined convex programming \cite{dcp}\cite{convexjl} . We then simply had to implement the identity, composition, and monoidal product operations described in this paper, and apply the $\Para$ construction, supplied for us by Catlab, to obtain the desired results. To summarize, our library, which we call AlgebraicControl.jl, has a core API consisting of the following three functions:

\begin{enumerate}
    \item \texttt{one\_step\_bifunction(X, U, l, g, f\_d)}
    \item \texttt{compose(bifunctions\dots)}
    \item \texttt{make\_problem(F, us, x0, xN)}
\end{enumerate}
\noindent The function \texttt{one\_step\_bifunction} takes the state space~$X$, control space~$U$, loss function~$\ell$, constraint function~$g$, and dynamics $f_d$, and then it  constructs the one-step bifunction associated with this data as in Definition \ref{def:onestep}. Next, the function \texttt{compose} takes a list of bifunctions and constructs their composite. When the argument to compose is $N$ copies of a one-step bifunction associated with an MPC subproblem, Theorem \ref{thm:nfoldcomp} ensures that the resultant composite correctly encodes the corresponding MPC optimization problem for a prediction horizon of $N$. Finally, the function \texttt{make\_problem} takes a parameterized bifunction $F$ along with a list of control variables and initial and final state variables and exports this data to a Convex.jl problem. 
The resultant optimization problem can then be readily solved by any solver supported by Convex.jl. 

\vspace{-1ex}
\begin{example}
For a specific choice of $A$, $B$, $Q$, and $R$, Listing \ref{l:good_impl} shows how AlgebraicControl.jl automates the construction of Example \ref{ex:running} and the translation into a Convex.jl problem. 
This framework uses category theory, namely Proposition \ref{prop:genconv} and Theorem \ref{thm:nfoldcomp}, to
allow control engineers to focus on the high-level components of the problem, like accurately modeling the system dynamics and choosing appropriate costs and constraints. 
This is because bifunction composition handles the book-keeping for managing variables at each time step and ensuring the subproblems are correctly coupled.
\end{example}

\vspace{-2ex}
\section{Conclusion and Future Work}
We have developed a categorical framework for formally linking model predictive control problem formulations with convex optimization problem formulations that captures the categorical structure of MPC problems. 
Future work will address the construction of specialized convex optimization routines that can exploit the categorical structure of MPC problems as identified in this paper. Finally, although we focused specifically on MPC problems for linear dynamical systems, there is nothing in principle which prevents this framework from being extended to the non-linear case. This is another direction for future work.


\bibliographystyle{unsrt}
\bibliography{sample}

\end{document}